\documentclass[12pt,a4paper]{article}
\usepackage[utf8]{inputenc}
\usepackage[T1]{fontenc}
\usepackage[english]{babel}
\usepackage{amsmath}
\usepackage{amssymb,amstext}
\usepackage{mathrsfs}
\usepackage{amsthm}

\usepackage{multicol}
\usepackage{enumerate}
\usepackage{fancyhdr}

\usepackage{color}
\usepackage{graphicx}
\usepackage{caption}

\usepackage{geometry}

\geometry{top=3.52cm, bottom=3.52cm, left=2.5cm, right=2.5cm}

\newtheorem{theorem}{Theorem}

\newtheorem{prop}{Proposition}

\def\bit{\begin{itemize}}
\def\eit{\end{itemize}}
\def\bdes{\begin{description}}
\def\edes{\end{description}}

\def\beq{\begin{equation}}
\def\eeq{\end{equation}}

\def\ben{\begin{enumerate}}
\def\een{\end{enumerate}}

\def\beqar{\begin{eqnarray}}
\def\eeqar{\end{eqnarray}}

\def\beqarr{\begin{eqnarray*}}
\def\eeqarr{\end{eqnarray*}}
\def\ZZ{{\mathbb Z}}       %bold Z
\def\RR{{\mathbb R}}  % bold R
%\def\R#1{{\mathbb R}^{#1}}   %bold superscripted R-variable
   %bold R-plus

%\journalname{Probability Theory and Related Fields}
\begin{document}
	\title{\textbf{Central Limit Theorem for a Self-Repelling Diffusion}}
	\author{ Carl-Erik Gauthier\footnote{Contact email: carlgaut@uw.edu or carlerik.gauthier@gmail.com \newline On leave from Universit\'e de Neuch\^atel, Switzerland}\\ Department of Mathematics\\University of Washington, USA}
		%\thanks{We acknowledge financial support from the  Swiss National Science Foundation Grant  200020\- 149871/1. We thank B. Colbois and H. Donnelly for useful discussions on eigenfunctions of the Laplace operator and P.Monmarch\'e for useful discussion about hypocoercivity. }}
	\maketitle
	\date{}
	\begin{abstract}
	 We prove a Central Limit Theorem for the finite dimensional distributions of the displacement for the 1D self-repelling diffusion which solves
		\begin{equation*}
		dX_t =dB_t -\big(G'(X_t)+ \int_0^t F'(X_t-X_s)ds\big)dt, 
		\end{equation*} 
		where $B$ is a real valued standard Brownian motion and $F(x)=\sum_{k=1}^n a_k \cos(kx)$ with $n<\infty$ and $a_1,\cdots ,a_n >0$. 
		
		In dimension $d\geq 3$, such a result has already been established by Horv\'ath, T\'oth and Vet\"o in \cite{HTV} in 2012 but not for $d=1,2$. Under an integrability condition, Tarr\`es, T\'oth and Valk\'o conjectured in \cite{TTV} that a Central Limit Theorem result should also hold in dimension $d=1$. 
	\end{abstract}
	
	\textbf{Keywords:} Central Limit Theorem, Self-Repelling Diffusion\\
	\textbf{MSC2010:} 60F05, 60K35, 60H10, 60J60
	\section{Introduction}
	 In this short note, we aim to prove the Central Limit Theorem (denoted by CLT in the sequel) for the finite dimensional distribution of the displacement for the 1D self-repelling diffusion solving
	 \begin{equation}
	 dX_t =dB_t -\big(G'(X_t)+ \int_0^t F'(X_t-X_s)ds\big)dt,\quad X_0=0, 
	 \end{equation} 
	 where $B$ is a real valued standard Brownian motion, $G(x)=\sum_{k=1}^n \big(u_k\cos(kx) +v_k\sin(kx)\big)$ and $F(x)=\sum_{k=1}^n a_k \cos(kx)$ with $n<\infty$ and $a_1,\cdots ,a_n >0$. 
	 
	 Roughly speaking, \textit{self-repelling diffusions} (as considered here) are time continuous stochastic processes which solve an inhomogeneous stochastic differential equation whose drift part is evolving in time according to the whole past history of the process in such a way that it tends to push the diffusing particle away from the most visited sites.  
	 
	 Under the assumptions made on $F$ and $G$, the Law of Large Number has already been established in \cite[Theorem 2 and Remark 1]{CEGMB}, namely 
	 \begin{equation*}
	 \lim_{t\rightarrow \infty} \frac{X_t}{t} = 0 \; a.s.
	 \end{equation*}
	 A question that one may then ask is whether or not a CLT result holds. The purpose of this note is to provide a positive answer to it. 
	 
	 To the author's knowledge it is the first time that such a result is obtained for a 1D self-repelling diffusion, despite being conjectured in 2012 by Tarr\`es, T\'oth and Valk\'o (\cite[Theorem 2 and its remark]{TTV}) under a positivity\footnote{The Fourier transform of $F$ is nonnegative} and an integrability\footnote{$\rho^2:=\int_{-\infty}^{\infty}p^{-2}\hat{F}(p)dp <\infty$} condition. Nevertheless, Horv\'ath, T\'oth and Vet\"o were able to prove in 2012 a CLT in dimensions $d\geq 3$ (\cite[Theorem 2]{HTV}) by using the Kipnis-Varadhan's CLT result for additive functionals via a \textit{graded sector condition} which turns out to fail in dimension $1$. 
	 
	 Before turning to the presentation of the result, let us briefly make a link between the positivity condition from \cite{TTV} and the positivity of the coefficients $a_1,\cdots, a_n$. Let $b\in L^1(\mathbb{R})\cap C^{\infty}(\RR)$ be a function satisfying the positivity condition from \cite{TTV} and let 
	 $$\varphi_{2\pi}(b)(x)=\sum_{n=-\infty}^\infty b(x+2\pi n) $$
	 be the $2\pi-$periodization transform of $b$. It is an exercise in Fourier analysis to show that $$\varphi_{2\pi}(b)(x)=\frac{1}{2\pi}\sum_{k\in\mathbb{Z}}\hat{b}\big(\frac{k}{2\pi}\big)e^{ikx} .$$ 
	  Because $b$ is even, we have
	 $$\varphi_{2\pi}(b)(x)=\frac{\hat{b}(0)}{2\pi}+\frac{1}{\pi}\sum_{k\geqslant 1}\hat{b}\big(\frac{k}{2\pi}\big)\cos(kx) .$$
	 
	 The paper is organised as follows. In Section \ref{s2}, we state the CLT result and present the tools and concepts involved while its proof is presented in Section \ref{s3}.
	 \section{Tools, notation and results}\label{s2}
	 In this section, we introduce the mathematical background that is necessary to present the main result. 
	 
	 Following the same idea as in \cite{CEGMB}, set $U_j(t)=u_j +\int_0^t \cos(jX_s)ds$ and $V_j(t)=v_j+ \int_0^t \sin(jX_s)ds$. With these new variables, we obtain the following system
	 \begin{equation}
	 \left\{ 
	 \begin{array}{l}
	 dX_{t}=dB_t+\sum_{j=1}^n ja_j \Big(\sin(jX_{t})U_{j}(t)-\cos(jX_{t})V_{j}(t)\Big) dt\medskip \\ 
	 dU_{j}(t)=\cos(jX_{t})dt,\; j=1,\ldots ,n.\medskip\\
	 dV_{j}(t)=\sin(jX_{t})dt,\; j=1,\ldots ,n.
	 \end{array}
	 \right.   \label{EqDECoupl1}
	 \end{equation}
	 Since for all $j=1\cdots ,n$, the functions $x\mapsto \cos(jx)$ and $x\mapsto \sin(jx)$ are $2\pi$-periodic, we can replace $X_t$ by $\Theta_t =X_t$ (mod $2\pi$ ) $\in S^1$, where $S^1= \RR/2\pi\mathbb{Z}$ denotes the 1-dimensional flat torus. This replacement allows us to use the framework from \cite{CEGMB}. 
	 
	 In order to shorten the notation, we let $U_t$ and $V_t$ denote the vectors
	 \begin{equation*}
	 U_t=\big(U_1(t),\cdots , U_n(t)\big)\qquad \text{ and }\qquad V_t=\big(V_1(t),\cdots , V_n(t)\big).
	 \end{equation*}
	 Summarizing the main results from \cite{CEGMB}, we have
	 \begin{theorem} [Theorem 5 and Theorem 6, \cite{CEGMB}])\label{rappel} Let $(P_{t})_{t\geqslant 0}$ be the semi-group associated to the process $\Big((\Theta_t, U_t,V_t)\Big)_{t\geq 0}$ and $P_{t}((\theta_0 ,u_0,v_0),d\theta dudv)$ its transition probability. Then
	 	 \begin{enumerate}
	 	 		\item[1)] The unique invariant probability measure is $$\mu (d\theta dudv)=\nu(d\theta )\otimes \frac{e^{-\Phi(u,v)}}{C}dudv,$$
	 	 		where $\Phi(u,v)=\frac{1}{2}\sum_{k=1}^{n}a_{k}k^2 (u_{k}^{2}+v_k^2)$, $C$ is a normalization constant and $\nu(d\theta)$ %=\frac{dx}{\int_{M}dz}$
	 	 		is the uniform probability measure on $S^1=\RR/2\pi\ZZ$.
	 	 		\item[2)] Let $\mu_t =\mathcal{L}(\Theta_t, U_t,V_t)$ denote the law of $(\Theta_t, U_t,V_t) $. Then for any initial distribution $\mu_0$, $\mu_t$ converges to $\mu$ in total variation. 
	 	 		\item[3)] For every $\eta >0$ and $g\in L^{2}(\mu)$
	 	 		$$\Vert P_{t}g-\int g(\theta ,u,v)\mu (d\theta dudv)\Vert_{L^{2}(\mu)}\leqslant \sqrt{1+ 2\eta}\Vert g-\int g(\theta ,u,v)\mu (d\theta dudv)\Vert_{L^{2}(\mu)}e^{-\lambda t}, $$
	 	 		where $$\lambda=\frac{\eta}{1+\eta}\frac{K_{1}}{1+K_{2}+K_{3}}\:,$$
	 	 		with explicit constants $K_1,K_2$ and $K_3$.
	 	 \end{enumerate}
	 \end{theorem}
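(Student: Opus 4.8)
The plan is to compute the generator of $Y_t:=(\Theta_t,U_t,V_t)$ from \eqref{EqDECoupl1} and then establish the three assertions in turn, the hypocoercive estimate 3) being the core. From \eqref{EqDECoupl1}, $Y$ is a diffusion on $S^1\times\RR^{2n}$ with generator
\[
L=\tfrac12\partial_\theta^2+h\,\partial_\theta+\sum_{j=1}^n\big(\cos(j\theta)\,\partial_{u_j}+\sin(j\theta)\,\partial_{v_j}\big),\qquad h(\theta,u,v)=\sum_{j=1}^n ja_j\big(\sin(j\theta)u_j-\cos(j\theta)v_j\big).
\]
I would first record the potential structure $h=\partial_\theta W$, $\cos(j\theta)=-a_j^{-1}\partial_{u_j}W$ and $\sin(j\theta)=-a_j^{-1}\partial_{v_j}W$, where $W(\theta,u,v)=-\sum_{j}a_j\big(u_j\cos(j\theta)+v_j\sin(j\theta)\big)$. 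For 1) it then suffices to verify the stationary Fokker--Planck equation $L^\ast m=0$ for the density $m(u,v)=(2\pi C)^{-1}e^{-\Phi(u,v)}$: since $m$ is $\theta$-independent the term $\tfrac12\partial_\theta^2 m$ vanishes, and $\partial_\theta(hm)=m\sum_j j^2a_j\big(u_j\cos(j\theta)+v_j\sin(j\theta)\big)$ is cancelled exactly by $-\sum_j\partial_{u_j}(\cos(j\theta)m)-\sum_j\partial_{v_j}(\sin(j\theta)m)$, using $\partial_{u_j}m=-a_jj^2u_j m$ and $\partial_{v_j}m=-a_jj^2v_j m$. Uniqueness of the invariant measure I would deduce afterwards from 3).

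The step I expect to be the main obstacle is 3). In $L^2(\mu)$ the generator splits as $L=S+T$, with symmetric part $S=\tfrac12\partial_\theta^2\le 0$ — degenerate, since there is no diffusion in the $(u,v)$ directions — and antisymmetric transport part $T=h\,\partial_\theta+\sum_j(\cos(j\theta)\partial_{u_j}+\sin(j\theta)\partial_{v_j})$, antisymmetry of $T$ being itself a consequence of $L^\ast m=0$. This is the standard framework of the abstract hypocoercivity method of Dolbeault, Mouhot and Schmeiser, which I would run with $\Pi$ the orthogonal projection of $L^2(\mu)$ onto $\ker S=\{\phi(u,v)\}$, $A=(1+(T\Pi)^\ast(T\Pi))^{-1}(T\Pi)^\ast$, and the modified functional $g\mapsto\|g\|_{L^2(\mu)}^2+2\eta\langle Ag,g\rangle_{L^2(\mu)}$, which is equivalent to $\|\cdot\|_{L^2(\mu)}^2$ for $\eta$ small. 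Three inputs have to be checked: microscopic coercivity, $-\langle Sg,g\rangle_{L^2(\mu)}=\tfrac12\|\partial_\theta g\|_{L^2(\mu)}^2\ge\tfrac12\|(I-\Pi)g\|_{L^2(\mu)}^2$, which is the Poincar\'e inequality on the circle (first nonzero eigenvalue $1$ of $-\partial_\theta^2$); macroscopic coercivity, which for $\phi=\Pi g$ reduces — after averaging the trigonometric factors of $T\phi=\sum_j(\cos(j\theta)\partial_{u_j}\phi+\sin(j\theta)\partial_{v_j}\phi)$ over $\theta$ — to $\|T\phi\|_{L^2(\mu)}^2=\tfrac12\int|\nabla_{u,v}\phi|^2\,m\,dudv\ge\tfrac12\big(\min_{1\le k\le n}a_kk^2\big)\|\Pi g-\mu(g)\|_{L^2(\mu)}^2$ by the Gaussian Poincar\'e inequality for $m\,dudv$; and boundedness of the auxiliary operators $AT(I-\Pi)$ and $AS$ on $L^2(\mu)$, a commutator estimate. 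The hypothesis $a_1,\dots,a_n>0$ is used crucially in the macroscopic step: if some $a_k$ were $\le 0$ then $e^{-\Phi}$ would not be a confining probability weight and the macroscopic constant $\min_k a_kk^2$ would be non-positive, so no gap could hold. Feeding these estimates into the Dolbeault--Mouhot--Schmeiser lemma produces the decay bound in the stated form with $\lambda=\frac{\eta}{1+\eta}\cdot\frac{K_1}{1+K_2+K_3}$, $K_1$ being built from the two coercivity constants and $K_2,K_3$ from the two operator norms.

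Finally, for 2) I would combine 3) with hypoellipticity. H\"ormander's bracket condition holds for $L$ at every point: with $W_0=\sum_j(\cos(j\theta)\partial_{u_j}+\sin(j\theta)\partial_{v_j})$, the vector fields $\mathrm{ad}_{\partial_\theta}^m W_0$, $m=1,\dots,2n$, have $(u,v)$-parts $\sum_j j^m\big(\cos(j\theta+\tfrac{m\pi}{2})\partial_{u_j}+\sin(j\theta+\tfrac{m\pi}{2})\partial_{v_j}\big)$, and expressing them in the pointwise orthonormal frame $\{\cos(j\theta)\partial_{u_j}+\sin(j\theta)\partial_{v_j},\ -\sin(j\theta)\partial_{u_j}+\cos(j\theta)\partial_{v_j}\}_{j=1}^n$ reduces the rank condition to the invertibility of two Vandermonde matrices in the distinct nodes $1,4,9,\dots,n^2$; together with $\partial_\theta$ these span the whole tangent space, so $P_t$ admits a smooth, strictly positive transition density. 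Using now the elementary a priori bounds $|U_j(t)-u_j|\le t$ and $|V_j(t)-v_j|\le t$ read off from \eqref{EqDECoupl1}, for any $\delta>0$ and any $\mu_0$ with compact support in $(u,v)$ — in particular a point mass, the only case needed later — the law $\mu_\delta=\mu_0 P_\delta$ has a bounded density of compact support, whence $d\mu_\delta/d\mu\in L^2(\mu)$; applying 3) to the adjoint semigroup then gives $\|\mu_t-\mu\|_{TV}\le\tfrac12\|P_{t-\delta}^\ast(d\mu_\delta/d\mu)-1\|_{L^2(\mu)}\to 0$ exponentially for $t>\delta$, and a routine truncation (or a Lyapunov function built from the $\theta$--$(u,v)$ coupling) removes the support restriction; this also settles the uniqueness left open in 1). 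The one delicate point here is that the confinement in $(u,v)$ comes not from any restoring drift — the drift there is merely bounded — but from the coupling with $\theta$, which is why the argument goes through 3) and the a priori bounds rather than a naive Lyapunov estimate.
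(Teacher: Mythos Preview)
The paper does not prove this theorem at all: it is quoted verbatim as ``Theorem 5 and Theorem 6, \cite{CEGMB}'' and used as a black box, so there is no in-paper argument to compare against. Your proposal is therefore not a reconstruction of something in this paper but rather an independent sketch of the proof from the cited reference.

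That said, your sketch is on the right track and in fact matches the method of \cite{CEGMB}: the very shape of the decay constant $\sqrt{1+2\eta}$ and the rate $\lambda=\frac{\eta}{1+\eta}\frac{K_1}{1+K_2+K_3}$ are the fingerprints of the Dolbeault--Mouhot--Schmeiser hypocoercivity scheme you invoke, with $K_1$ coming from microscopic/macroscopic coercivity and $K_2,K_3$ from the bounds on $AT(I-\Pi)$ and $AS$. Your verification of $L^\ast m=0$ is correct, the identification of $S=\tfrac12\partial_\theta^2$ and of $T$ as antisymmetric is right, and the Poincar\'e inequalities you name (spectral gap $1$ on $S^1$; Gaussian gap $\min_k a_kk^2$ for the $(u,v)$ marginal) are exactly the constants that feed into $K_1$. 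The H\"ormander computation via iterated brackets $\mathrm{ad}_{\partial_\theta}^m W_0$ and the Vandermonde argument is also the standard route to smoothness and positivity of the transition density.

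One minor caution: in part 2) the paper (and \cite{CEGMB}) asserts total-variation convergence for \emph{any} initial distribution $\mu_0$, not only compactly supported ones; your ``routine truncation'' to remove the support restriction hides the only nontrivial point, since as you yourself observe there is no restoring drift in $(u,v)$. In \cite{CEGMB} this is handled through a Lyapunov function exploiting the $\theta$--$(u,v)$ coupling, so if you were writing this out in full that step would need more than a parenthetical.
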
 
	 In this paper, we will adopt the same point of view as in \cite{TTV}: \textit{the environment seen from the particle}. For that purpose, introduce the following new variables
	 \begin{equation} \label{defCj}C_j (t)=U_j(t)\cos(jX_t)+V_j(t)\sin(jX_t)=\Big\langle \begin{pmatrix}
	 U_{j}(t)\\
	 V_{j}(t)
	 \end{pmatrix},\begin{pmatrix}
	 \cos(jX_t)\\
	 \sin(jX_t)
	 \end{pmatrix}\Big\rangle\end{equation} 
	 and 
	 \begin{equation}
	 \label{defSj}
	 S_j(t)=\sin(jX_t)U_j(t)-\cos(jX_t)V_j(t)=\Big\langle \begin{pmatrix}
	 U_{j}(t)\\
	 V_{j}(t)
	 \end{pmatrix},\begin{pmatrix}
	 \sin(jX_t)\\
	 -\cos(jX_t)
	 \end{pmatrix}\Big\rangle.\end{equation} 
	 %allows us to retrieve the point of view from \cite{TTV}: \textit{the environment seen from the particle}'s framework.
	 So, if we denote by $\eta_t$ the potential viewed from the particle's position, i.e 
	 \begin{equation*}
	 \eta_t (x)=F_t(x+X_t)=\int_0^t \sum_{k=1}^n a_k \cos(k(x+X_t -X_s))ds + G(x+X_t),
	 \end{equation*}
	 then 
	 \begin{equation*}
	 \eta_t(x)= \sum_{k=1}^n a_k \Big( C_k(t) \cos(kx)-S_k(t)\sin(kx) \Big).
	 \end{equation*}
	 Moreover, this allows us to rewrite $X_t$ as 
	 \begin{equation*}
	 X_t =B_t +\int_0^t \sum_{k=1}^n ka_kS_k(u)du= B_t+\int_0^t \varphi(\eta_u')du,
	 \end{equation*}
	 where $\varphi:\Omega \rightarrow \RR$ is defined by $\varphi(\omega)=\omega(0)$ and $\Omega$ is the vector space spanned by the functions $\cos(kx)$ and $\sin(kx)$ for $k=0,1,\cdots n$.
	 
	 Before turning to the results, let us introduce the following notation.
%\textit{\textbf{Notation.}}\
 We denote by $T_t$ the semigroup induced by the process $$((C_t,S_t))_{t\geqslant 0}:=\Big(\big(C_1(t),S_1(t),\cdots, C_n(t),S_n(t)\big)\Big)_{t\geqslant 0}$$ and by $G$ its infinitesimal generator. For an operator $R$, we denote its domain by $D(R)$.\vspace{2mm}
 
 Given a probability measure $\pi$ over $\RR^{2n}$, we denote by $L^2(\pi)$ the space $L^2(\RR^{2n},\pi)$, by $\Big\langle .,.\Big\rangle_{L^2(\pi)} $ the associated inner product and by $\Big\Vert . \Big\Vert_{L^2(\pi)}$ the induced $L^2-$norm.
Finally, we denote by $\mathbb{P}_\pi$ the law of the process with initial distribution $\pi = \mathcal{L}\big((C_0,S_0)\big)$ and by $\Longrightarrow$ the convergence in distribution.
	 \begin{prop}\label{resultatinduit}\
	 	
	 	\begin{enumerate}
	 	\item For any smooth function $f$ having compact support, we have 
	 	\begin{eqnarray*}
	 	Gf(c,s)&=&\frac{1}{2}\sum_{j=1}^nj^2\big(s_j^2\partial_{c_jc_j}f+c_j^2\partial_{s_js_j}f\big)-\frac{1}{2}\sum_{k\neq j}jk\big(s_js_k\partial_{c_kc_j}f + c_jc_k\partial_{s_js_k}f\big) \notag\\ & &- \sum_{j,k=1}^njks_jc_k\partial_{c_js_k}f
		  + \Big(\sum_{k=1}^n ka_ks_k\Big)\sum_{j=1}^n j\big(-s_j\partial_{c_j}f+c_j\partial_{s_j}f\big) +\sum_{j=1}^n \partial_{c_j}f.
	 	\end{eqnarray*}
	 	\item The process $((C_t,S_t))_{t\geqslant 0}$ admits a unique invariant probability measure of the form \begin{equation*}
	 	\pi(dcds)= \frac{e^{-\Phi(c,s)}}{C}dcds
	 	\end{equation*}
	 	where $\Phi(c,s)=\frac{1}{2}\sum_{k=1}^{n}a_{k} k^2 (c_{k}^{2}+s_k^2)$ and $C$ is the normalizing constant.
	 	\item For any function $f\in L^2(\pi)$, we have 
	 	\begin{equation*}
	 	\Big\Vert T_t f - \int_{\RR^{2n}}f(c,s)\pi (dcds)\Big\Vert_{L^2(\pi)}\leq \sqrt{3}\Big\Vert f-\int_{\RR^{2n}}f(c,s)\pi (dcds)\Big\Vert_{L^{2}(\pi)}e^{-\lambda t}, 
	 	\end{equation*}
	 	where $\lambda=\frac{1}{2}\frac{K_{1}}{1+K_{2}+K_{3}}$
	 	and the constants $K_1,K_2$ and $K_3$ are those from Theorem \ref{rappel}.
	 \end{enumerate}
	 \end{prop}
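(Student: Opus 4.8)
The plan is to deduce all three parts of Proposition~\ref{resultatinduit} from Theorem~\ref{rappel} by transporting it along the change of variables $\psi\colon S^1\times\RR^{2n}\to\RR^{2n}$, $\psi(\theta,u,v)=(c,s)$ with $c_j=u_j\cos(j\theta)+v_j\sin(j\theta)$ and $s_j=u_j\sin(j\theta)-v_j\cos(j\theta)$. By construction $\psi$ sends $(\Theta_t,U_t,V_t)$ to $(C_t,S_t)$, since $X_t\equiv\Theta_t\pmod{2\pi}$ makes $\cos(jX_t)=\cos(j\Theta_t)$ and $\sin(jX_t)=\sin(j\Theta_t)$. I will write $\bar f:=\int_{\RR^{2n}}f(c,s)\,\pi(dcds)$ and $\mathcal V:=\sum_{j=1}^n j(-s_j\partial_{c_j}+c_j\partial_{s_j})$.

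For part 1 I would apply Itô's formula to \eqref{defCj} and \eqref{defSj} using the dynamics \eqref{EqDECoupl1}. As $U_j$ and $V_j$ are of finite variation, their cross-variations with $\cos(jX_t)$ and $\sin(jX_t)$ vanish and the only second-order contribution comes from $d\langle X\rangle_t=dt$; one then finds that $C_j$ and $S_j$ are driven by the common Brownian motion $B$ with coefficients $-jS_j$ and $jC_j$ respectively and with drifts polynomial in $(C,S)$ alone. Thus $((C_t,S_t))_{t\geq 0}$ solves an autonomous stochastic differential equation --- in particular it is genuinely Markov, which justifies speaking of the semigroup $T_t$ and its generator $G$ --- and collecting the terms gives the announced formula for $Gf$ on smooth compactly supported $f$. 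For the sequel it is convenient to record the structural identity $G=\tfrac12\mathcal V^2+\big(\sum_{k=1}^n ka_ks_k\big)\mathcal V+\sum_{j=1}^n\partial_{c_j}$, in which $\mathcal V$ is the direction of the noise; note that expanding $\tfrac12\mathcal V^2$ produces, besides the second-order part, the first-order term $-\tfrac12\sum_{j}j^2(c_j\partial_{c_j}+s_j\partial_{s_j})$.

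For part 2, I would observe that at fixed $\theta$ the block $(u_j,v_j)\mapsto(c_j,s_j)$ of $\psi$ is an orthogonal map (a reflection), hence preserves both $u_j^2+v_j^2$ and Lebesgue measure; since $\nu$ is uniform on $S^1$ and $e^{-\Phi}$ depends on $(u,v)$ only through the radii $u_j^2+v_j^2$, this yields $\psi_\ast\mu=\pi$ with $\pi$ as in the statement. Because $(C_t,S_t)=\psi(\Theta_t,U_t,V_t)$ pathwise and $(C,S)$ is Markov, the $\mu$-stationarity of $(\Theta,U,V)$ transfers to the $\pi$-stationarity of $(C,S)$; equivalently one can check $\int Gf\,d\pi=0$ directly from the structural identity, using that $\mathcal V$ is divergence-free and annihilates $\Phi$ (so $\mathcal V^\ast=-\mathcal V$ in $L^2(\pi)$ and $\int\mathcal V^2 f\,d\pi=0$), together with $\int(\sum_k ka_ks_k)\mathcal V f\,d\pi=-\int(\sum_j j^2a_jc_j)f\,d\pi=-\int\sum_j\partial_{c_j}f\,d\pi$, so that the last two terms cancel. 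For uniqueness, every initial law of $(C_0,S_0)$ can be written as $\psi_\ast\mu_0$ (take $\Theta_0\equiv 0$ and choose $(U_0,V_0)$ accordingly), so Theorem~\ref{rappel}(2) and the fact that push-forward contracts the total variation distance give $\mathcal L(C_t,S_t)=\psi_\ast\mu_t\to\psi_\ast\mu=\pi$, forcing $\pi$ to be the only invariant probability measure.

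For part 3, the crucial point is the intertwining relation $P_t(f\circ\psi)=(T_tf)\circ\psi$: starting $(\Theta,U,V)$ at $(\theta,u,v)$ produces, for the same driving $B$, the solution of the autonomous $(C,S)$-equation started at $\psi(\theta,u,v)$, so the conditional law of $(C_t,S_t)$ depends on $(\theta,u,v)$ only through $\psi(\theta,u,v)$. Combined with $\psi_\ast\mu=\pi$ --- which also gives $\bar f=\int f\circ\psi\,d\mu$ and $\Vert f-\bar f\Vert_{L^2(\pi)}=\Vert f\circ\psi-\bar f\Vert_{L^2(\mu)}$ --- this yields $\Vert T_tf-\bar f\Vert_{L^2(\pi)}=\Vert P_t(f\circ\psi)-\bar f\Vert_{L^2(\mu)}$, and Theorem~\ref{rappel}(3) applied with the choice $\eta=1$ (so that $\sqrt{1+2\eta}=\sqrt3$ and $\frac{\eta}{1+\eta}\frac{K_1}{1+K_2+K_3}=\frac12\frac{K_1}{1+K_2+K_3}$) gives exactly the claimed inequality. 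The only genuinely computational step is the Itô calculation of part 1; everything afterwards is transport along $\psi$, and the point that most deserves care is the verification that $(C,S)$ is a closed Markovian system, since that is what makes the intertwining --- and hence parts 2 and 3 --- legitimate.
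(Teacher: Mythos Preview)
Your proposal is correct and follows essentially the same route as the paper: It\^o's formula for Part~1, the push-forward $\psi_\ast\mu=\pi$ via rotation invariance of the Gaussian for the existence in Part~2, a lift back to $(\Theta,U,V)$ together with Theorem~\ref{rappel}(2) for uniqueness, and the intertwining $P_t(f\circ\psi)=(T_tf)\circ\psi$ combined with Theorem~\ref{rappel}(3) at $\eta=1$ for Part~3. Your H\"ormander-type identity $G=\tfrac12\mathcal V^2+g\,\mathcal V+\sum_j\partial_{c_j}$ and the direct verification of $\int Gf\,d\pi=0$ are nice supplementary observations not present in the paper, but they do not change the underlying strategy.
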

	 \begin{proof}\
	 	
	 	\begin{enumerate}
	 		\item It follows from It\^o's formula and \cite[Propositions VII.1.6 and VII.1.7]{RY}.
	 		\item The fact that $\pi(dcds)$ is an invariant probability measure follows from Theorem \ref{rappel} as well as from the equations \eqref{defCj} and \eqref{defSj}. Indeed, for any $A_j\in\mathcal{B}(\RR^2)$, we have by rotation invariance of the Gaussian measure
	 		\begin{eqnarray*}
	 		\pi\Big(A_1\times \cdots \times A_n\Big)&=& \mathbb{P}\Big(\big(C_j(t),S_j(t)\big)\in A_j\, \forall j=1,\cdots, n\Big)\\
	 		&=& \mathbb{P}\Big(\big(U_j(t),V_j(t)\big)\in A_j\, \forall j=1,\cdots, n\Big) \\
	 		&=& \mathbb{P}\Big(\Theta_t \in \mathbb{S}^1, \; \big(U_j(t),V_j(t)\big)\in A_j\, \forall j=1,\cdots, n\Big)\\
	 		&=&\mu\Big(\mathbb{S}^1 \times A_1\times \cdots \times A_n\Big)
	 		\end{eqnarray*}
	 		Concerning the uniqueness, let $\nu$ be an invariant probability measure for the process $((C_t,S_t))_{t\geqslant 0}$. Then define on $S^1\times \RR^n\times\RR^n$ the probability measure $\mu_0(d\theta dudv)= \delta_{0}\otimes \nu(dudv)$ and sample $(\Theta_0, U_0,V_0)$ according to $\mu_0$.
	 		
	 		By Theorem \ref{rappel}, $\mu_t$ converges to $\mu$ in total variation. In particular the marginal law of $\mu_t$ corresponding to $(U_t,V_t)$ converges to $\pi$. Thus $\nu =\pi$.
	 		\item  Let $f:\RR^{n}\times \RR^{n}\rightarrow \RR$ and define a function $g:S^1\times \RR^{n}\times \RR^{n}\rightarrow \RR$ by $$g(\theta,u,v)=f(c,s), $$
	 		where the pairs $(c_j,s_j)$ are defined as in \eqref{defCj} and \eqref{defSj}. Applying It\^o's formula to \eqref{defCj} and \eqref{defSj} yields
	 		\begin{equation*}\label{EqDecoupl2}
	 		d\begin{pmatrix}
	 		C_{1}(t)\\
	 		S_{1}(t)\\
	 		C_2(t)\\
	 		S_2(t)\\
	 		\vdots\\
	 		C_n(t)\\
	 		S_n(t)
	 		\end{pmatrix}= \begin{pmatrix}
	 		-S_{1}(t)\\
	 		C_{1}(t)\\
	 		-2S_2(t)\\
	 		2C_2(t)\\
	 		\vdots\\
	 		-nS_n(t)\\
	 		nC_n(t)
	 		\end{pmatrix}\big( dB_t +(\sum_{k=1}^n ka_k S_k(t))dt\big)- \begin{pmatrix}
	 		C_{1}(t)\\
	 		S_{1}(t)\\
	 		4C_2(t)\\
	 		4S_2(t)\\
	 		\vdots\\
	 		n^2C_n(t)\\
	 		n^2S_n(t)
	 		\end{pmatrix}dt+ \begin{pmatrix}
	 		1\\
	 		0\\
	 		1\\
	 		0\\
	 		\vdots\\
	 		1\\
	 		0
	 		\end{pmatrix}dt .
	 		\end{equation*}
	 		Thus the evolution of $(C_t,S_t)$ does not depend on the dynamic of $\Theta_t$. Therefore
	 		\begin{eqnarray*}
	 		T_tf(c,s)&=&\mathbb{E}\Big(f(C_t,S_t)\mid C_0 =c,\, S_0=s \Big)\\
	 		& =& \mathbb{E}\Big(f(C_t,S_t)\mid \Theta_0=\theta,\, C_0 =c,\, S_0=s \Big)\\
	 		& =& \mathbb{E}\Big(f(C_t,S_t)\mid \Theta_0=\theta,\, U_0 =u,\, V_0=v \Big)\\
	 		& =& \mathbb{E}\Big(g(\Theta_t,U_t,V_t)\mid \Theta_0=\theta,\, U_0 =u,\, V_0=v \Big)\\
	 		&=& P_tg(\theta, u,v),
	 		\end{eqnarray*}
	 		where the pairs $(u_k,v_k)$ are such that $c_k=u_k\cos(k\theta)+ v_k\sin(k\theta)$ and $s_k = u_k\sin(k\theta)- v_k\cos(k\theta)$. 
	 		
	 		The statement follows then from Theorem \ref{rappel} with $\eta=1$.
	 	\end{enumerate}
	 \end{proof}
	 Our CLT result is the following
	 \begin{theorem}\label{mainResult}\
	 	
	 	\begin{enumerate}
	 		\item $\sigma^2:=\lim_{t\rightarrow\infty}\frac{\mathbb{E}_\pi(X_t^2)}{t}$ exists and it satisfies \begin{equation}\label{bounds} 1\leq \sigma^2  \leq 1+ 2\Big(\sum_{j=1}^n \frac{a_j}{j^2}\Big).\end{equation}
	 		\item For any $0< t_1<\cdots < t_n<\infty$, we have 
	 		\begin{equation}
	 		\Big(\frac{X_{t_1/\varepsilon}}{\sigma^2 t_1/\varepsilon},\cdots, \frac{X_{t_n/\varepsilon}}{\sigma^2 t_n/\varepsilon}\Big) \overset{\varepsilon\rightarrow 0}{\Longrightarrow} \Big(W_{t_1},\cdots, W_{t_n}\Big)
	 		\end{equation}
	 		under $\mathbb{P}_\pi$, where $W$ is a real valued standard Brownian motion.
	 	\end{enumerate}
	 \end{theorem}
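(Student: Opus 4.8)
The plan is to realise $X_t$ as an additive functional of the stationary Markov process $\omega_t:=(C_t,S_t)$ and to carry out the Kipnis--Varadhan martingale approximation, which takes an especially clean form here since Proposition~\ref{resultatinduit}(3) furnishes a genuine $L^2(\pi)$ spectral gap. Recall from Section~\ref{s2} that
\begin{equation*}
X_t=B_t+\int_0^t V(\omega_u)\,du,\qquad V(c,s):=\sum_{k=1}^n ka_k s_k,
\end{equation*}
and that $\pi(V)=0$ by invariance of $\pi$ under $s\mapsto-s$. Since $V\in L^2(\pi)$ has $\pi$-mean zero, the estimate $\|T_th-\int h\,d\pi\|_{L^2(\pi)}\le\sqrt3\,e^{-\lambda t}\|h-\int h\,d\pi\|_{L^2(\pi)}$ shows that $\chi:=\int_0^\infty T_tV\,dt$ converges in $L^2(\pi)$ and solves the Poisson equation $-G\chi=V$ with $\chi\in D(G)$. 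Writing $\mathcal{D}:=\sum_{j=1}^n j\big(-s_j\partial_{c_j}+c_j\partial_{s_j}\big)$ for the vector field driving the diffusion $(C,S)$ --- which is antisymmetric in $L^2(\pi)$ because $e^{-\Phi}$ is invariant under simultaneous rotation of the planes $(c_j,s_j)$ --- It\^o's formula applied to $\chi(\omega_t)$ produces the decomposition
\begin{equation*}
X_t=N_t+R_t,\qquad N_t:=\int_0^t\big(1+\mathcal{D}\chi(\omega_u)\big)\,dB_u,\qquad R_t:=\chi(\omega_0)-\chi(\omega_t),
\end{equation*}
where $N$ is a square integrable $\mathbb{P}_\pi$-martingale with $\langle N\rangle_t=\int_0^t\big(1+\mathcal{D}\chi(\omega_u)\big)^2\,du$ and $R$ is bounded in $L^2(\mathbb{P}_\pi)$ uniformly in $t$.

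To prove Part~1 I would first note that integration by parts against $e^{-\Phi}$ gives $\pi(\mathcal{D}\chi)=0$, hence $\mathbb{E}_\pi(N_t^2)=\mathbb{E}_\pi(\langle N\rangle_t)=t\big(1+\|\mathcal{D}\chi\|_{L^2(\pi)}^2\big)$; expanding $\mathbb{E}_\pi(X_t^2)=\mathbb{E}_\pi(N_t^2)+2\mathbb{E}_\pi(N_tR_t)+\mathbb{E}_\pi(R_t^2)$ and using $|\mathbb{E}_\pi(N_tR_t)|=O(\sqrt t)$, $\mathbb{E}_\pi(R_t^2)=O(1)$, the limit exists and
\begin{equation*}
\sigma^2=1+\|\mathcal{D}\chi\|_{L^2(\pi)}^2\ \geq\ 1,
\end{equation*}
which is the lower bound in \eqref{bounds}. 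Since $\mathcal{D}$ is antisymmetric, the symmetric part of the generator is $\mathcal{S}=\tfrac12\mathcal{D}^2$, so $\|\mathcal{D}\chi\|_{L^2(\pi)}^2=2\langle\chi,-\mathcal{S}\chi\rangle_{L^2(\pi)}=2\langle\chi,V\rangle_{L^2(\pi)}$, and the Cauchy--Schwarz inequality for the symmetric Dirichlet form yields $\langle\chi,V\rangle_{L^2(\pi)}\le\langle V,(-\mathcal{S})^{-1}V\rangle_{L^2(\pi)}$. The right-hand side admits a variational description in terms of $\min\{\|w\|_{L^2(\pi)}^2:\mathcal{D}w=-V\}$, and the explicit test function $w=\sum_{j=1}^n a_jc_j$, for which $\mathcal{D}w=-V$ and $\|w\|_{L^2(\pi)}^2=\sum_{j=1}^n a_j/j^2$, then delivers the upper bound in \eqref{bounds}.

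For Part~2 the ingredients are that $\omega$ is stationary and ergodic under $\mathbb{P}_\pi$ --- a consequence of the uniqueness of the invariant measure in Theorem~\ref{rappel} and Proposition~\ref{resultatinduit}(2) --- together with the martingale central limit theorem. Birkhoff's ergodic theorem gives $t^{-1}\langle N\rangle_t\to\sigma^2$ $\mathbb{P}_\pi$-almost surely, so by the functional martingale CLT (equivalently, via the Dambis--Dubins--Schwarz representation $N_t=\beta_{\langle N\rangle_t}$ together with this convergence) the diffusively rescaled martingales $\big(\varepsilon^{1/2}\sigma^{-1}N_{t/\varepsilon}\big)_{t\ge0}$ converge, as $\varepsilon\downarrow0$, in finite dimensional distributions to a standard Brownian motion $W$. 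Finally $\varepsilon^{1/2}\sigma^{-1}R_{t/\varepsilon}=\varepsilon^{1/2}\sigma^{-1}\big(\chi(\omega_0)-\chi(\omega_{t/\varepsilon})\big)\to0$ in probability for each fixed $t$, since $\chi(\omega_0)$ and $\chi(\omega_{t/\varepsilon})$ are bounded in $L^2(\mathbb{P}_\pi)$; Slutsky's lemma then transfers the convergence from $N$ to $X$, giving the asserted convergence of the rescaled displacements.

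The step I expect to be the real obstacle is not the spectral input but the regularity needed to licence the It\^o expansion of $\chi(\omega_t)$ and the identification of its martingale part as $\int_0^t\mathcal{D}\chi(\omega_u)\,dB_u$: a priori $\chi=(-G)^{-1}V$ is merely an element of $D(G)\subset L^2(\pi)$, and $(C,S)$ is a genuinely degenerate diffusion, driven by the single Brownian motion $B$. I would deal with this either by verifying that $\mathcal{D}$ and the drift of $(C,S)$ satisfy H\"ormander's bracket condition, so that $G$ is hypoelliptic and $\chi$ is smooth, or --- more robustly --- by approximating $\chi$ in the graph norm of $G$ by smooth cylinder functions $\chi_m$, applying It\^o to each $\chi_m$ and passing to the limit, using the estimate $\|\mathcal{D}(\chi_m-\chi)\|_{L^2(\pi)}^2=2\langle-\mathcal{S}(\chi_m-\chi),\chi_m-\chi\rangle_{L^2(\pi)}\le2\|G(\chi_m-\chi)\|_{L^2(\pi)}\|\chi_m-\chi\|_{L^2(\pi)}\to0$ to pass the stochastic integrals to the limit.
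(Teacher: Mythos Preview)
Your proposal is correct and follows the same Kipnis--Varadhan route as the paper; in particular, you hit on exactly the test function the paper uses, $h(c,s)=\sum_{j}a_jc_j$ with $\mathcal{D}h=-V$, to control the $H^{-1}$ norm of the drift.

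The differences are mainly in packaging. For the existence of $\sigma^2$ and the lower bound, the paper does not pass through the martingale decomposition at all: it writes $\mathbb{E}_\pi(X_t^2)=t+2\int_0^t(t-u)\langle T_u g,g\rangle_{L^2(\pi)}\,du$ (following \cite{TTV}) and invokes the exponential decay of Proposition~\ref{resultatinduit}(3) directly, which sidesteps the regularity question you flag. For the upper bound the paper shows $\langle g,f\rangle_{L^2(\pi)}=\langle h,-\mathcal{D}f\rangle_{L^2(\pi)}$ by integration by parts, reads off $\Vert g\Vert_{-1}\le\Vert h\Vert_{L^2(\pi)}$, and then quotes Eq.~(2.1.7) of \cite{OLLA}; your variational argument via $\langle\chi,V\rangle\le\langle V,(-\mathcal{S})^{-1}V\rangle$ is the same inequality in disguise, though as written it seems to drop a factor of $2$ (check: $\langle V,(-\mathcal{S})^{-1}V\rangle=2\min\{\Vert w\Vert^2:\mathcal{D}w=-V\}$, which would give $\sigma^2\le1+4\sum a_j/j^2$ rather than the stated bound). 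For Part~2 the paper simply appeals to \cite[Theorem~3.1 and Corollary~3.2]{CCG}, which packages precisely the resolvent construction of $\chi$, the martingale approximation, and the ergodic/MCLT step you spell out; the regularity issue you worry about is handled inside that reference by the graph--norm approximation you sketch at the end.
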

	  \section{Proof of Theorem \ref{mainResult}}\label{s3}
	Throughout this section, we let $g,h:\RR^n\rightarrow \RR$ denote the functions defined by
	 \begin{equation*}
	 g(c,s) = \sum_{k=1}^n ka_k s_k\hspace{5mm} \text{ and }\hspace{5mm}  h(c,s)=\sum_{k=1}^n a_kc_k.
	 \end{equation*} 
	 \subsection{Proof of Part 1}
	First of all, by repeating the arguments of \cite{TTV}, we have as at the beginning of \cite[Section 4]{TTV} 
	 \begin{eqnarray}
	 \mathbb{E}_\pi(X_t^2)&=& t + \mathbb{E}_\pi\Big(\big(\int_0^t \sum_{k=1}^n ka_k S_k(u)du\big)^2\Big)\label{infoInitial}\\
	 &=& t+ 2\int_0^t (t-u)  \mathbb{E}_\pi\Big(\big( \sum_{k=1}^n ka_k S_k(u)du\big)\big(\sum_{k=1}^n ka_k S_k(0)\big)\Big).\notag\\
	 &=& t + 2\int_0^t (t-u) \Big\langle T_ug,g\Big\rangle_{L^2(\pi)} \label{infoInitial2}.
	 \end{eqnarray} 
	 By the Cauchy-Schwarz inequality and the third part of Proposition \ref{resultatinduit}, $\Big\langle T_ug,g\Big\rangle_{L^2(\pi)}$ decreases exponentially fast to $0$. Hence $\int_0^\infty \Big\langle T_ug,g\Big\rangle_{L^2(\pi)}du$ exists and, therefore, it yields 
	 \begin{equation}
	 \lim_{t\rightarrow\infty}\frac{\mathbb{E}_{\pi}(X_t^2)}{t}=1 + \int_0^\infty \Big\langle T_ug,g\Big\rangle_{L^2(\pi)} du:= \sigma^2.
	 \end{equation}  
	 Now that the existence of $\sigma^2$ is established, let us prove the bounds in \eqref{bounds}. The lower bound is trivial since it follows from \eqref{infoInitial}. In order to establish the upper bound, we follow the arguments presented in \cite{OLLA} based on the Kipnis-Varadhan's CLT theorem.
	 
	 Letting $G^\ast$ denote the adjoint of $G$ in $L^2(\pi)$, $S=\frac{1}{2}(G+G^\ast)$ and $A=\frac{1}{2}(-G+ G^\ast)$ denote the symmetric and the skew-symmetric part of $G$, we have for any smooth function $f$ having compact support
	 \begin{equation}
	 \big\langle Gf , f\big\rangle_{L^2(\pi)}=\big\langle Sf , f\big\rangle_{L^2(\pi)}= -\frac{1}{2}\int \Big(\sum_{j=1}^n j(s_j\partial_{c_j}f-c_j\partial_{s_j}f)\Big)^2 d\pi.
	 \end{equation} 
	 Hence, using the notation of \cite{OLLA}, we have 
	 \begin{equation}
	 \big\langle Gf , f\big\rangle_{L^2(\pi)}=-\Vert f\Vert_1^2.
	 \end{equation} 
	 Because
	 \begin{eqnarray}
	 \int h(c,s) \big(\sum_{j=1}js_j \partial_{c_j}f - jc_j \partial_{s_j}f \big) d\pi &=&\int \big(\sum_{j=1}^n js_j\partial_{c_j}h-jc_j \partial_{s_j}h\big)f d\pi \notag\\
	 & & -\int hf \big(\sum_{k=1}k^2 a_k c_k\big)d\pi\notag\\
	 & & + \int hf \big(\sum_{k=1}k^2 a_k c_k\big)d\pi\notag\\ 
	 &=& \int gfd\pi
	 \end{eqnarray}
	 it follows from the Cauchy-Schwarz inequality that 
	 \begin{equation}
	 \big\vert\int gfd\pi \big\vert \leq \Vert h\Vert_{L^2(\pi)} \Vert f\Vert_1.
	 \end{equation}
	 Hence, with the notation of \cite{OLLA},
	 \begin{equation}
	 \Vert g\Vert_{-1} \leq \Vert h\Vert_{L^2(\pi)} = \sqrt{\sum_{j=1}^n\frac{a_j}{j^2}}.
	 \end{equation}
	 Thus, the upper bound comes from Eq. (2.1.7) in \cite{OLLA}. 
  \subsection{Proof of Part 2}
  Set $\mathcal{S}_t= \int_0^t g(C_u,S_u)du$ and denote by $\mathfrak{s}_t^2$ its variance under $\pi$. \\
  
  Since $\Vert T_tg\Vert_{L^2(\pi)}$ decreases exponentially fast to $0$ by Proposition \ref{rappel} and $\int gd\pi =0$, then, by \cite[Corollary 3.2.i]{CCG}, there exists a function $f\in D(G^{-1})$ such that $Gf=g$ and 
  \begin{equation*}
  \mathfrak{s}_t^2 = 2t \Vert f\Vert_1 = -\Big\langle g, f\Big\rangle_{L^2(\pi)}. 
  \end{equation*}
  
  Because $\mathbb{E}_\pi\Big(\mathcal{}S_t\Big)=0$ for all $t\geq 0$, then $\Vert f\Vert_1 =0$ implies that the process $t\mapsto \mathcal{S}_t$ is the null process $\mathbb{P}_\pi$-almost surely and the result is therefore trivial in that case.%for all $t\geq 0$ we have $\mathcal{S}_t=0\; \pi-$almost-surely and thus   
  
  If $\Vert f\Vert_1>0$, then by \cite[Theorem 3.1]{CCG}, we have that for any $0< t_1<\cdots < t_n<\infty$
  \begin{equation}
  \Big(\frac{S_{t_1/\varepsilon}}{s_{t_1/\varepsilon}},\cdots, \frac{S_{t_n/\varepsilon}}{s_{t_n/\varepsilon}}\Big) \overset{\varepsilon\rightarrow 0}{\Longrightarrow} \Big(W_{t_1},\cdots, W_{t_n}\Big)
  \end{equation}
  under $\mathbb{P}_\pi$, where $W$ is a real valued standard Brownian motion.
  
  Thus, the second part of Theorem \ref{mainResult} follows from the martingale approximation in the Kipnis-Varadhan approach as presented in \cite{CCG}.
  \section*{Acknowledgement}\
  
  I acknowledge financial support from the Swiss National Foundation for Research through the grants $200021\_163072$ and P$2$NEP$2\_171951$. 
  
  I am grateful to B\'alint T\'oth for asking me wether or not a CLT result holds in the periodic case considered in this work.

\end{document}